\newtheorem{prop}{Proposition}
\newtheorem{lemma}{Lemma}
\newtheorem{assumption}{Assumption}
\newtheorem{corol}{Corollary}
\title{Shortest Dubins Path to a Circle}
\author{Satyanarayana G. Manyam\thanks{Corresponding author. Email: msngupta@gmail.com} \and David Casbeer \and Alexander L. Von Moll \and Zachariah Fuchs
}
\begin{document}

\maketitle    

%%%%%%%%%%%%%%%%%%%%%%%%%%%%%%%%%%%%%%%%%%%%%%%%%%%%%%%%%%%%%%%%%%%%%%
\begin{abstract}
{\it The Dubins path problem had enormous applications in path planning for autonomous vehicles. In this paper, we consider a generalization of the Dubins path planning problem, which is to find a shortest Dubins path that starts from a given initial position and heading, and ends on a given target circle with the heading in the tangential direction. This problem has direct applications in Dubins neighborhood traveling salesman problem, obstacle avoidance Dubins path planning problem etc. We characterize the length of the four CSC paths as a function of angular position on the target circle, and derive the conditions which to find the shortest Dubins path to the target circle. }
\end{abstract}

%%%%%%%%%%%%%%%%%%%%%%%%%%%%%%%%%%%%%%%%%%%%%%%%%%%%%%%%%%%%%%%%%%%%%%

%%%%%%%%%%%%%%%%%%%%%%%%%%%%%%%%%%%%%%%%%%%%%%%%%%%%%%%%%%%%%%%%%%%%%%
\section{INTRODUCTION}
Dubins path planning problem is widely used for path planning and trajectory planning for unmanned aerial vehicles with finite yaw rate. For the vehicles with finite yaw rate, it is natural to use Dubins paths to generate flyable trajectories that satisfy the curvature constraints. Given an initial and final points in a plane, and a direction at these two points, a Dubins path gives the shortest path between these points that satisfy the minimum turn radius constraints. 

There are several results in the literature related to the Dubins paths \cite{bui1994accessibility, bui1994shortest, yang2002optimal, wong2004uav, manyam2017tightly, manyam2018tightly} . In \cite{bui1994accessibility, bui1994shortest}, the analysis of the accessibility regions of Dubins paths is done and the Dubins synthesis problem are presented. The three points Dubins problem which is a generalization of the Dubins path planning problem is presented in \cite{yang2002optimal, wong2004uav}; here, an initial and final configuration is prescribed along with a third point. The curvature constrained path between initial and final points should pass through the given third point. The curvature constrained path planning problem in the presence of wind is addressed in \cite{mcgee2005optimal,techy2009minimum}. The problem of finding shortest curvature constrained path in the presence of obstacles is presented in \cite{boissonnat1996polynomial, macharet2009generation, agarwal1995motion, maini2016path}. Another generalization of the Dubins interval problem is presented in \cite{manyam2017tightly, manyam2018tightly}; it gives the algorithm to solve the shortest curvature constrained path between two points, where the heading is restricted to prescribed intervals at the initial and the final points. This generalization helped in improving the lower bounds for Dubins traveling salesman problem significantly.

In this paper, we propose another generalization of the Dubins path planning problem: Given an initial location and direction, a fixed target circle and rotational direction, find the shortest curvature constrained path from initial configuration to a point on the circle, such that the final heading of the path is tangent to the circle in the prescribed direction. This fundamental problems has significant applications such as the obstacle avoidance path planning, neighborhood Dubins traveling salesman problem \cite{isaacs2011algorithms, macharet2012evolutionary, hespanha}. This problem also arises when finding shortest path for a pursuit evader problem where evader is following a cyclical path. Thus the presented shortest Dubins path to a target circle has significant applications in several path planning problems. 

In \cite{Dubins1957}, Dubins states that the shortest path consists of at most three segments, where each segment could be a circular arc or a straight line. If we represent the circular arc with C and, the straight line with S, the shortest path could be of the type CSC or CCC. Let L and R represent counter-clockwise and clockwise circular arcs respectively; the shortest path should be one of the following six combinations: LSL, RSL, RSR, LSR, LRL, and RLR. When the distance between the points in greater than four times the minimum turn radius, the shortest path could be one of the four combinations of CSC paths \cite{bui1994shortest, goaoc2013bounded}. 

Clearly, the shortest path from an initial configuration to a point on the circle also could only consists of one of these six combinations, or the degenerate cases there of. If one could find the shortest possible path of each of these cases, that starting from the initial configuration and ends on the circular with the final direction tangential and at given orientation, then the minimum of all the six cases gives the shortest Dubins path to the circle. In this paper, we address the four cases of the CSC paths, \textit{i.e} LSL, RSL, RSR, LSR. Under the assumption that the straight line distance between the initial position and any point on the circle is greater than four times the minimum turn radius, it is sufficient to just look at these four cases, however one needs to analyze all the six cases to find the optimal path for the general case. As a first step towards solving the general case, we address the problem with the assumption on distances. 

\subsection{Problem Statement}
Given an initial configuration ($x$, $y$ coordinates and the heading direction) $(x_i,y_i,\theta_i)$, the target circle, and the rotational direction (clockwise/counter-clockwise), find the shortest path, subject to the minimum turn radius constraints, from the initial configuration to a point on the target circle with final heading direction tangential to the circle in the specified rotational direction. The problem setup and feasible paths with final heading tangential to the circle in clockwise and counter-clockwise directions are shown in the Figs. \ref{fig:pathCCW} and \ref{fig:pathCW} respectively.

%\begin{figure}[htpb]
%\begin{center}
%{\includegraphics[width=3.25in]{figures/prblmdesc.eps}}
%\end{center}
%\caption{Initial configuration and possible final configurations on the target circle}
%\label{fig:prblmdesc} 
%\end{figure}

\begin{figure}[htpb]
\begin{center}
\subfigure[Final tangential direction is clockwise to the target circle]{\includegraphics[width=3in]{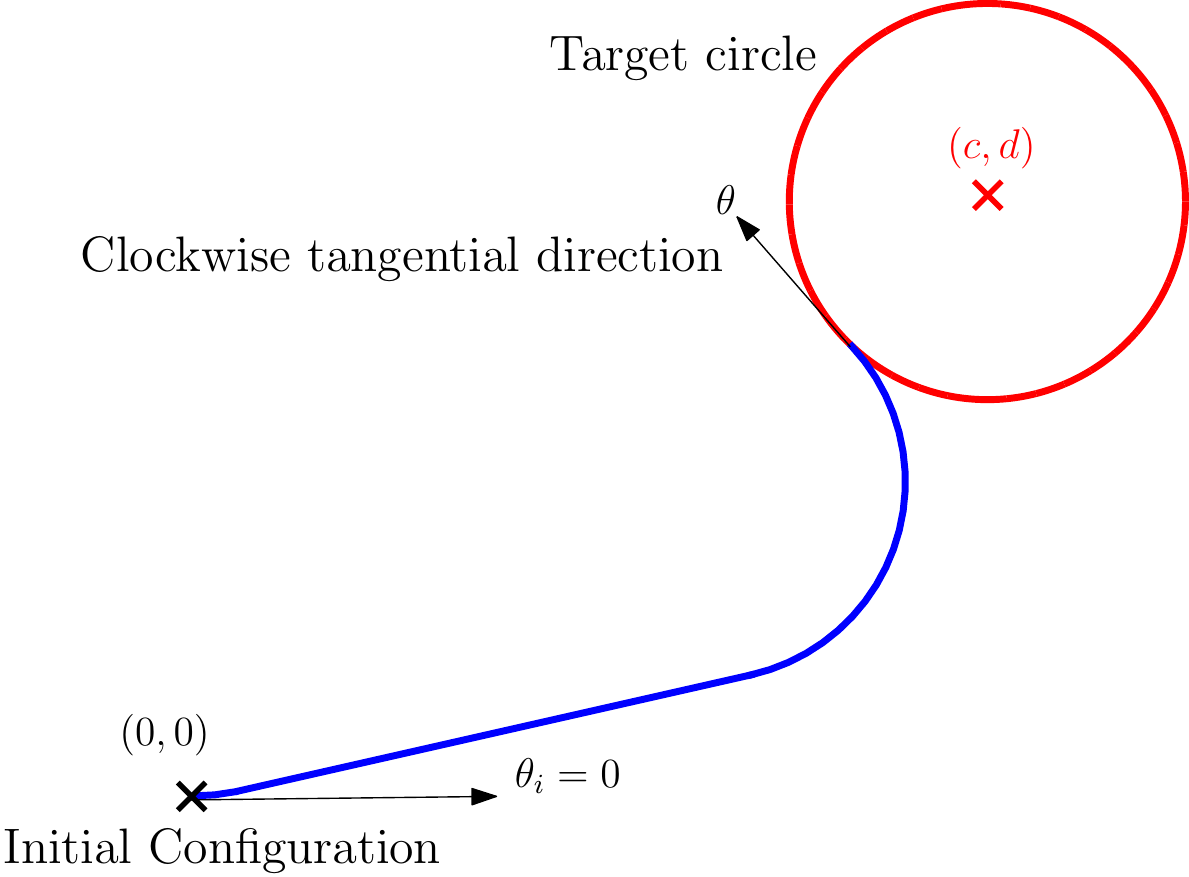}\label{fig:pathCCW}}
\subfigure[Final tangential direction is counter-clockwise to the target circle]{\includegraphics[width=3in]{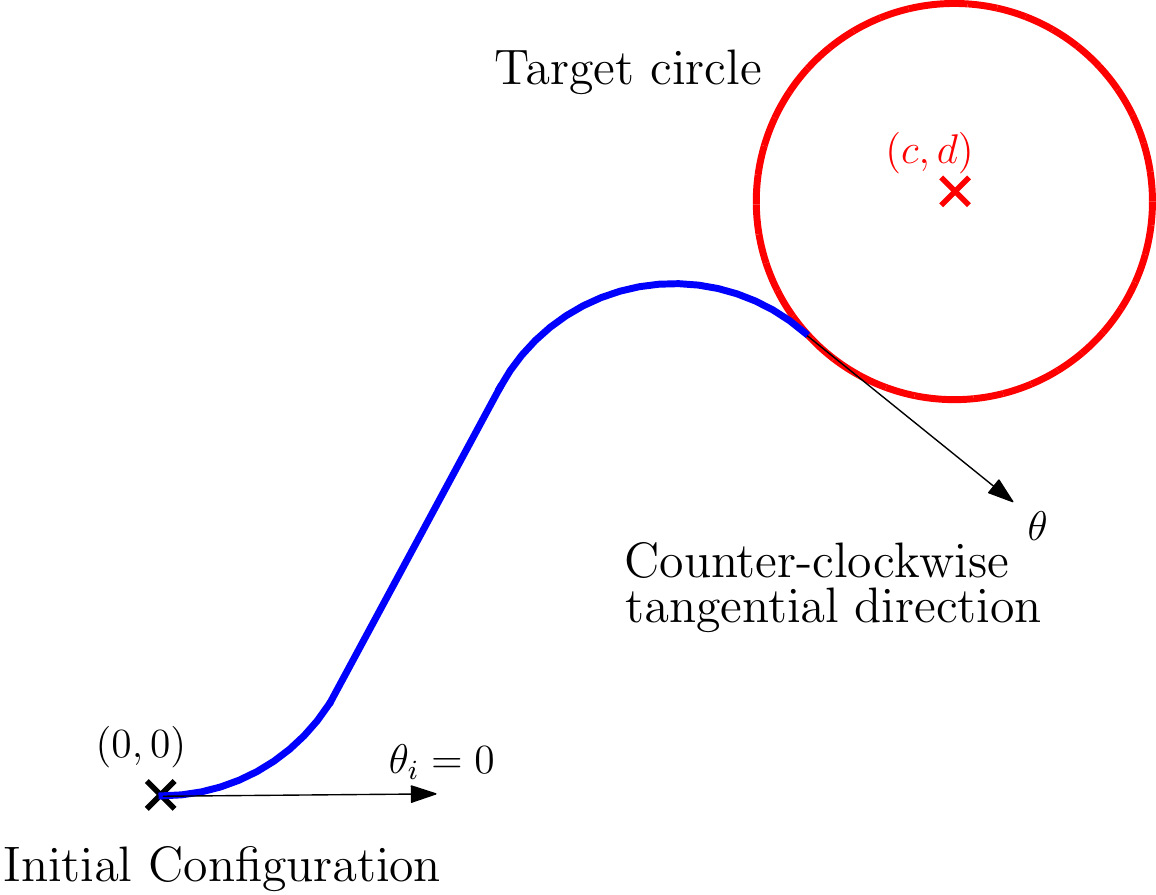}\label{fig:pathCW}}
\end{center}
\caption{Feasible paths to a circle with final direction tangential to the target circle}
\label{figure_ASME} 
\end{figure}

\subsection*{Notations:}

\begin{table}[h!]
\renewcommand \caption [2][]{}
\caption{}
\begin{center}
\label{tab:ressum1}
\begin{tabular}{rl}
$C_1/C_2:$ & First or second arc/circle of the CSC path \\
$C_3:$ & Target circle \\
$r$: &  Minimum turn radius and radius of the target circle\\
$\theta:$&  Final heading of the Dubins path\\
$\alpha:$&  Angular position of the final point on the target circle \\
$\phi_1/ \phi_2:$&  Angle subtended by the first/second arc of the CSC path\\
$L_S:$&Length of the straight line/middle segment of the CSC path
\end{tabular}
\end{center}
\end{table}

%\begin{flalign*}
%C_1/C_2&: \mbox{First or second arc/circle of the CSC path} \\
%C_3&:  \mbox{Target circle}\\
%r&:  \mbox{Minimum turn radius and radius of the target circle}\\
%\theta&:  \mbox{Final heading of the Dubins path}\\
%\alpha&:  \mbox{Angular position of the final point on the target circle} \\
%\phi_1/ \phi_2&:  \mbox{Angle subtended by the first/second arc of the CSC path}\\
%L_S&: \mbox{Length of the straight line/middle segment of the CSC path}
%\end{flalign*}

\section{Main Result} \label{sec:main}

\begin{assumption} \label{assum:4r}
The distance between the initial and final position is always greater than four times the minimum turn radius ($r$).
\end{assumption}
\begin{assumption}\label{assum:equalr}
The radius of the target circle $C_3$ and the minimum turn radius of the vehicle are equal.
\end{assumption}

To find the shortest Dubins paths, one can aim to find the shortest paths of six classes of Dubins paths, and the one with minimum length among these six paths will be the shortest Dubins path to the circle. However, we restrict our analysis in this paper to the four types of the CSC paths, which is sufficient to find the optimal path under the Assumption \ref{assum:4r}. Note that, the analysis done in this paper is not just restricted to only the instances that satisfy Assumption \ref{assum:4r}, it also applies to any type of CSC paths when they exist.  Furthermore, it is clear from the symmetry and the equivalency between dubins paths \cite{shkel2001dubins}, we need to analyze only two types of paths among the four CSC paths. The RSR and LSR paths are symmetrical to the LSL and RSL paths respectively, and therefore the results for LSL and RSL paths are directly applicable to the other two paths. 

The heading direction at the final position on the circle is tangential to the circle, and this direction could be either clockwise or counter-clockwise to the target circle. Depending on the rotational direction chosen on the target circle, the shortest Dubins paths occur at different positions on the target circle. We analyze the LSL and RSL paths for these two cases separately in the subsections \ref{subsec:cw} and \ref{subsec:ccw}.

\subsection{Clockwise tangential direction}\label{subsec:cw}

\begin{prop}\label{thm:csccw}
The maximum and minimum of the the LSL and the RSL paths with respect to $\alpha$ occur when the direction of the straight line segment passes through the center of the target circle.
\end{prop}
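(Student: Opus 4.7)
The approach will be to write the length of each CSC path as an explicit function of $\alpha$, differentiate with respect to $\alpha$, and recognize the resulting stationarity condition as the geometric condition that the middle straight segment passes through the center $O_c$ of the target circle $C_3$.

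The key parameterization observes that, since the clockwise tangential heading forces $\theta = \alpha - \pi/2$, and since the last arc is a left turn for both LSL and RSL, the center of the second circle lies at $O_2 = O_c + 2r(\cos\alpha, \sin\alpha)$; thus $O_2$ sweeps a circle of radius $2r$ around $O_c$ as $\alpha$ varies, while $O_1$ is fixed by the initial configuration. This reduces the problem to a one-parameter family of CSC paths from the fixed circle $C_1$ to a moving circle about $O_c$.

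For LSL, a convenient simplification is that $\phi_1 + \phi_2 = \theta - \theta_i \pmod{2\pi}$, which is linear in $\alpha$ with slope $1$. Combined with $L_S = |O_1 O_2|$ and the standard expansion of $|O_1 O_2|^2$ in $\alpha$, the stationarity condition $dL/d\alpha = 0$ reduces, after a short trigonometric calculation, to $\sin(\alpha - \psi) = 1/2$, where $\psi$ is the angle of the straight segment. A separate computation of the signed perpendicular distance from $O_c$ to the line carrying the straight segment gives $r(1 - 2\sin(\alpha - \psi))$, so the two conditions are equivalent. The RSL case will follow the same blueprint but with $L_S = \sqrt{|O_1 O_2|^2 - 4r^2}$ since the straight segment is now an internal common tangent.

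The main obstacle is the RSL step. Because the straight-segment direction $\psi$ is offset from the angle of $\vec{O_1 O_2}$ by the internal-tangent angle $\delta = \arcsin(2r/|O_1 O_2|)$, the sum $\phi_1 + \phi_2 = \theta + \theta_i - 2\psi$ is no longer linear in $\alpha$, and contributes an extra $d\psi/d\alpha$ term to the derivative. The critical algebraic check is to verify that this extra term, together with the derivative of $L_S$, collapses into exactly the perpendicular-distance-to-$O_c$ condition obtained for LSL. Once this is confirmed, the symmetry between RSR/LSR and LSL/RSL noted in Section~\ref{sec:main} extends the characterization to all four CSC paths.
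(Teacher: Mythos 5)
Your plan is essentially the paper's own proof: it too writes $L = L_S + r(\phi_1+\phi_2)$, uses $\phi_1+\phi_2 = \theta-\theta_i$ for LSL and $\phi_1+\phi_2=\theta+\theta_i-2\psi$ for RSL, differentiates in $\alpha$, and shows the derivative equals $r-2r\cos\phi_2$, the signed perpendicular distance from the center to the straight segment (the paper states this identification as corollaries). The RSL algebraic check you flag as the main obstacle is exactly the computation in the paper's Appendix, where the $d\psi_2/d\alpha$ and $dL_S/d\alpha$ terms do collapse to the same expression $-2r\cos\phi_2+r$, so your plan goes through as stated.
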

Given the final heading direction is clockwise tangent to the target circle, we have $\theta = \alpha - \frac{\pi}{2}$. Using this relation, we prove the Proposition \ref{thm:csccw} in the following subsections. The plot of the length of the paths against the angular position on the target circle are shown in the Fig. \ref{fig:cscalcw}. Though we see only one discontinuity for each plot, there could be potentially two positions where the length is discontinuous.

\begin{figure}[h]
\begin{center}
\includegraphics[width=2.75in]{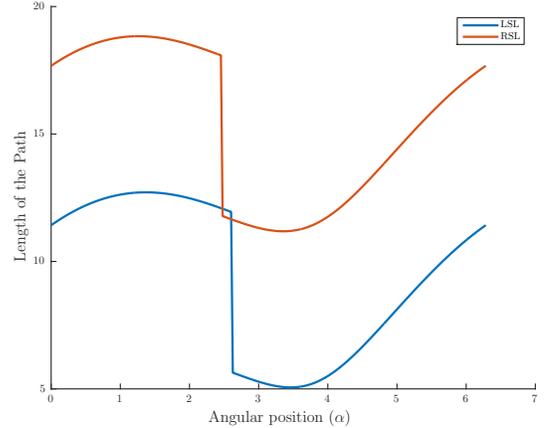}\end{center}
\caption{The length of the LSL and RSL paths vs the angular position on target circle. Final headings are clockwise tangents to the target circle.}
\label{fig:cscalcw} 
\end{figure}

\subsubsection{LSL Paths}
\begin{figure}[h]
\begin{center}
{\includegraphics[width=3.25in]{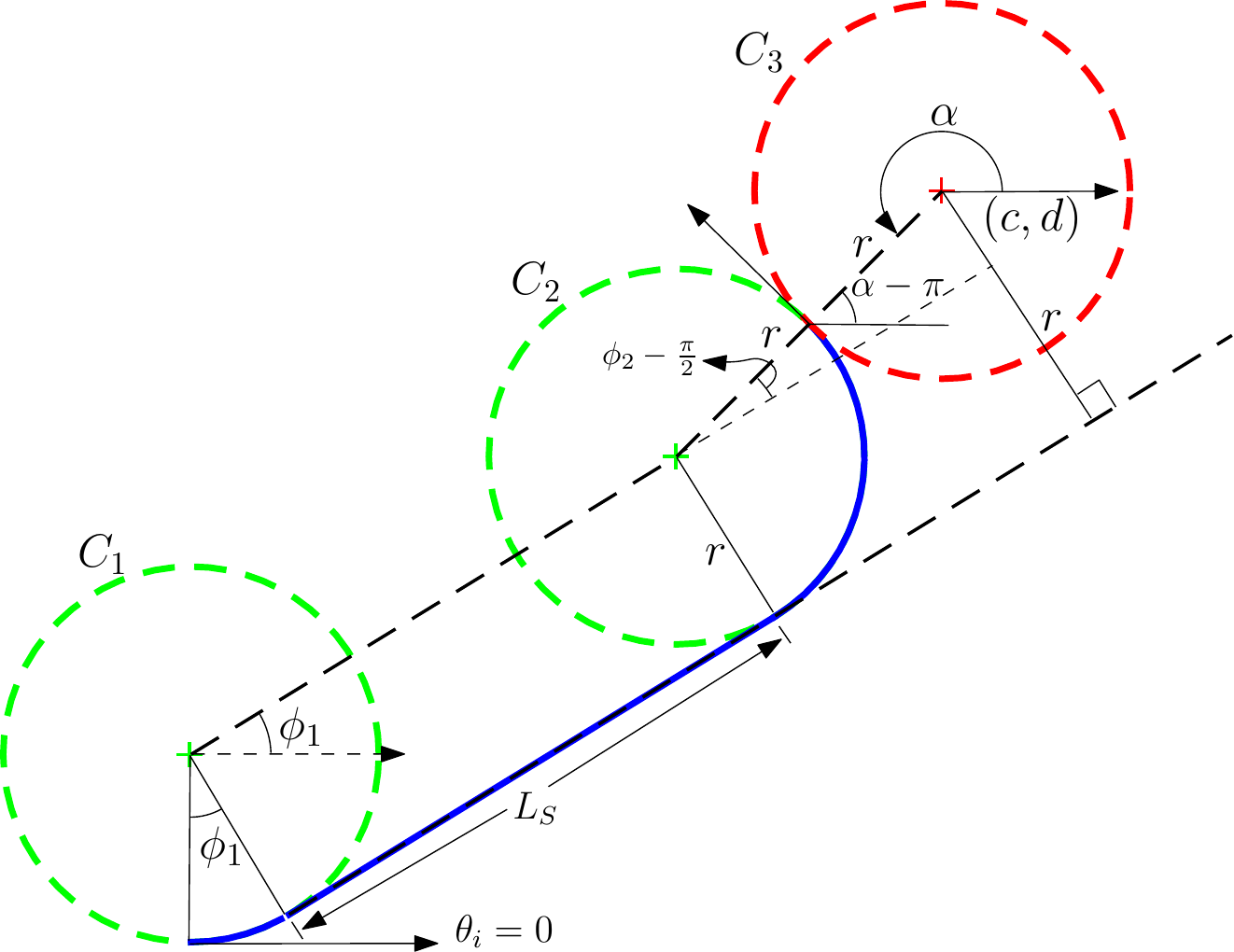}}
\end{center}
\caption{LSL Path}
\label{fig:pathlsl} 
\end{figure}
Without loss of generality we assume that the initial position is at the origin and the initial heading is towards the positive $x$-axis, i.e. the initial heading is at zero degrees with respect to $x$-axis as shown in the Fig. \ref{fig:pathlsl}. Let $(c,d)$ be the centre of the target circle and $r$ be its radius. We will express the length of the $LSL$ path for an arbitrary position on the target circle as a function of the angular position, $\alpha$.

Let $(x,y)$ be the coordinates of the final position of the Dubins path, and $\theta$ be the final heading direction. Using elementary geometry, one can see the centre of the second circle $C_2$ is $(x-r \sin \theta, y+ r\cos \theta)$. The length of the $LSL$ path is given by the sum of the three segments, the first circular arc, the straight line and the second circular arc. Let $\phi_1$ and $\phi_2$ be the angle subtended by the first and second arc respectively, and let $L_S$ be the length of the straight line (second segment of the CSC path). The length $L_S$ is equal to the distance between the centers of the circles $C_1$ and $C_2$. The length of the LSL path is given by the sum of these three segments $L_{LSL} =  L_S + r(\phi_1+\phi_2)$. Using geometry, one can derive $L_S$, $\phi_1$ and $\phi_2$, and are given as the following:

\begin{flalign}
L_S &= \sqrt{(x-r\sin \theta)^2 +(y + r\cos \theta -r)^2}, \label{eqn:lslls}\\
\phi_1 &= \mod\left(\mbox{atan2}\left(\frac{y+r\cos \theta -r}{x - r\sin \theta}\right),2\pi\right), \label{eqn:lslphi1}\\
\phi_2 &= \mod(\theta - \phi_1, 2 \pi). \label{eqn:lslphi2}
\end{flalign}

For this case, as the final heading is in the clockwise tangential direction at the target circle $\theta  = \alpha - \frac{\pi}{2}$, and by substituting $(x,y) = (c+r \cos\alpha, d+r \sin\alpha )$ in the equations (\ref{eqn:lslls} - \ref{eqn:lslphi1}) gives the following:
\begin{flalign}
L_S &= \sqrt{(c+2 r \cos \alpha)^2 +(d + 2r\sin \alpha-r)^2}, \label{eqn:lsllscw}\\
\phi_1 &= \mod\left(\mbox{atan2}\left(\frac{d + 2r\sin \alpha-r}{c+2 r\cos \alpha}\right),2\pi\right). \label{eqn:lslphi1cw}
\end{flalign}
One can find the maximum and minimum of the LSL paths by solving the following
\begin{equation}
 \frac{d}{d \alpha} \left( L_s+r(\phi_1+\phi_2) \right)= 0. \label{eqn:ddallsl}
 \end{equation}
 
 \begin{lemma}\label{lem:lslmin}
 The maximum or minimum of the length of the LSL path with respect to the angular position on the target circle ($\alpha$) occurs when $\phi_2=\frac{\pi}{3}$ or $\frac{5 \pi}{3}$, and at these two positions, the direction of the straight line of the LSL path (second segment) passes through the center of the target circle.
 \end{lemma}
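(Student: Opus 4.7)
My plan is to combine a short calculation with a geometric observation. The key starting point is that the middle segment of an LSL path does not change heading, so $\phi_1+\phi_2$ equals the total heading change of the path modulo $2\pi$. Since the initial heading is $0$ and the final heading is $\theta=\alpha-\pi/2$, we have $\phi_1+\phi_2\equiv\alpha-\pi/2\pmod{2\pi}$; hence, away from the discontinuities visible in Fig.~\ref{fig:cscalcw}, $d(\phi_1+\phi_2)/d\alpha=1$, and the critical equation \eqref{eqn:ddallsl} collapses to $dL_S/d\alpha=-r$. Differentiating \eqref{eqn:lsllscw} directly gives
\begin{equation*}
\frac{dL_S}{d\alpha}=\frac{2r\bigl[(d-r)\cos\alpha-c\sin\alpha\bigr]}{L_S},
\end{equation*}
so the optimality condition reduces to $L_S=2[c\sin\alpha-(d-r)\cos\alpha]$.

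To convert this into a statement about $\phi_2$, let $\psi$ denote the direction of the straight-line segment; by \eqref{eqn:lslphi1cw}, $\cos\psi=(c+2r\cos\alpha)/L_S$ and $\sin\psi=(d-r+2r\sin\alpha)/L_S$. Expanding and simplifying, the $2r\sin\alpha\cos\alpha$ cross terms cancel to leave
\begin{equation*}
\sin(\alpha-\psi)=\sin\alpha\cos\psi-\cos\alpha\sin\psi=\frac{c\sin\alpha-(d-r)\cos\alpha}{L_S}.
\end{equation*}
At a critical point the right-hand side equals $1/2$, so $\alpha-\psi\in\{\pi/6,5\pi/6\}$ modulo $2\pi$. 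Since $\phi_2\equiv\theta-\psi\equiv\alpha-\pi/2-\psi\pmod{2\pi}$, this immediately yields $\phi_2\in\{5\pi/3,\pi/3\}$, which is the first assertion of the lemma.

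For the geometric statement I would derive the pass-through-center condition independently and match it against the optimality condition. Parameterizing the tangent line through its tangent point on $C_1$ in direction $\psi$ and requiring that $(c,d)$ lie on it yields $c\sin\psi-(d-r)\cos\psi=r$; substituting the expressions for $\cos\psi$ and $\sin\psi$ above turns this, line for line, into $L_S=2[c\sin\alpha-(d-r)\cos\alpha]$, i.e., exactly the optimality condition. A quick picture confirms this: if the line passes through $C_3$, then the perpendicular from $C_2$ to the line has length $r$ by tangency, while $|C_2C_3|=2r$ by Assumption~\ref{assum:equalr}, so the right triangle with vertices $C_2$, the foot of the perpendicular, and $C_3$ has acute angle $\pi/3$ at $C_2$, which translates directly into $\phi_2\in\{\pi/3,5\pi/3\}$.

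The main obstacle is the bookkeeping around the $\mod 2\pi$ branches of $\phi_1$ and $\phi_2$: the identity $d(\phi_1+\phi_2)/d\alpha=1$ only holds away from the jumps of either angle, so one should verify that the extrema of the smooth length cannot occur at a discontinuity (this is suggested already by the figure, where each jump is an upward discontinuity). Everything else reduces to routine trigonometric manipulation.
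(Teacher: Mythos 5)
Your proof is correct and, for the first assertion, follows essentially the same route as the paper: both exploit $\phi_1+\phi_2\equiv\theta=\alpha-\tfrac{\pi}{2}\pmod{2\pi}$ to reduce the critical equation to a condition on $dL_S/d\alpha$, and both arrive at $\cos\phi_2=\tfrac12$ (your $\sin(\alpha-\psi)=\tfrac12$ with $\phi_2=(\alpha-\psi)-\tfrac{\pi}{2}$ is the same identity the paper writes as $2r\sin(\phi_1-\alpha)+r=-2r\cos\phi_2+r=0$, since your $\psi$ is the paper's $\phi_1$). Where you genuinely diverge is the second assertion: the paper justifies ``the straight segment passes through the center of the target circle'' only by pointing at Fig.~\ref{fig:lslminmax}, whereas you derive the collinearity condition $c\sin\psi-(d-r)\cos\psi=r$ analytically and show it is literally the same equation as the optimality condition $L_S=2[c\sin\alpha-(d-r)\cos\alpha]$, then corroborate it with the right-triangle argument ($|C_2C_3|=2r$, perpendicular distance $r$, hence angle $\pi/3$ at $C_2$). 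That is a strict improvement in rigor over the paper's figure-based appeal. Your closing caveat about extrema possibly sitting at the $\bmod\,2\pi$ jump points is a real loose end, but it is one the paper leaves equally unaddressed, so it does not put your proof behind theirs.
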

\begin{proof}
Refer to the Appendix for the proof.
\end{proof}

\begin{corol}
The absolute value of the derivative of the length of the LSL path, $\frac{d}{d \alpha} L_{LSL}$ represents the perpendicular distance from the center of the target circle to the straight line segment of the LSL path.
\end{corol}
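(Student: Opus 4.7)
The plan is to differentiate $L_{LSL}$ with respect to $\alpha$, reduce the result to a compact trigonometric form, and then verify by a short direct computation that the same expression equals (up to sign) the perpendicular distance from $(c,d)$ to the straight-line segment of the LSL path.

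First I would write $L_{LSL}=L_S+r(\phi_1+\phi_2)$. Because the final heading is $\theta=\alpha-\pi/2$ and $\phi_1+\phi_2\equiv\theta\pmod{2\pi}$, the turn-angle term contributes exactly $r$ under differentiation in $\alpha$, so only $dL_S/d\alpha$ requires real work. Rather than chain-ruling through the radical in (\ref{eqn:lsllscw}), I would use that $L_S$ is the Euclidean distance from the fixed $C_1$-center $(0,r)$ to the $C_2$-center $(c+2r\cos\alpha,\ d+2r\sin\alpha)$, so $dL_S/d\alpha$ is the projection of the $C_2$-center velocity $(-2r\sin\alpha,\ 2r\cos\alpha)$ onto the unit vector $(\cos\phi_1,\sin\phi_1)$ that points from the $C_1$-center to the $C_2$-center, which by (\ref{eqn:lslphi1cw}) is exactly the definition of $\phi_1$. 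The projection simplifies to $2r\sin(\phi_1-\alpha)$, giving
\[
\frac{dL_{LSL}}{d\alpha}\;=\;r\bigl[\,1+2\sin(\phi_1-\alpha)\,\bigr].
\]

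Next I would compute the signed perpendicular distance from $(c,d)$ to the straight-line segment directly. The segment has unit direction $(\cos\phi_1,\sin\phi_1)$ and, because the vehicle turns left onto $C_2$, meets $C_2$ tangentially at the point $(c+2r\cos\alpha+r\sin\phi_1,\ d+2r\sin\alpha-r\cos\phi_1)$, i.e.\ the $C_2$-center offset by $r$ along the right-normal $(\sin\phi_1,-\cos\phi_1)$. Dotting the displacement from this tangent point to $(c,d)$ with the unit left-normal $(-\sin\phi_1,\cos\phi_1)$ and collecting terms via $\sin^2\phi_1+\cos^2\phi_1=1$ and $\sin\phi_1\cos\alpha-\cos\phi_1\sin\alpha=\sin(\phi_1-\alpha)$ yields precisely $r\bigl[1+2\sin(\phi_1-\alpha)\bigr]$. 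Taking absolute values then identifies $|dL_{LSL}/d\alpha|$ with the unsigned perpendicular distance, as claimed. As a consistency check, the zeros of this expression occur when $\sin(\phi_1-\alpha)=-1/2$, which reproduces the extrema $\phi_2\in\{\pi/3,5\pi/3\}$ of Lemma \ref{lem:lslmin} together with its conclusion that the straight-line segment passes through $(c,d)$ at exactly those positions.

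The main obstacle I anticipate is sign bookkeeping. The L-turn onto $C_2$ forces the $C_2$-center to lie to the \emph{left} of the direction of motion, which fixes the tangent point as the $C_2$-center $+\,r(\sin\phi_1,-\cos\phi_1)$ rather than with the opposite sign, and this choice determines whether the geometric formula delivers $+r+2r\sin(\phi_1-\alpha)$ or $-r+2r\sin(\phi_1-\alpha)$. Only the former matches the derivative. Once the normal is oriented consistently with the left-handed orientation of both arcs, the remaining step is a single application of the sine angle-difference identity.
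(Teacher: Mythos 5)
Your proposal is correct and follows essentially the same route as the paper: both arguments reduce $\frac{d}{d\alpha}L_{LSL}$ and the perpendicular distance from $(c,d)$ to the straight segment to the common expression $r+2r\sin(\phi_1-\alpha)=r-2r\cos\phi_2$ and then identify them. The only difference is one of rigor rather than strategy --- the paper reads the distance $r+2r\sin(\phi_2-\frac{\pi}{2})$ off Fig.~\ref{fig:pathlsl} and leans on the derivative already computed in the proof of Lemma~\ref{lem:lslmin}, whereas you derive $\frac{d}{d\alpha}L_S$ by projecting the $C_2$-center velocity onto the segment direction and compute the signed distance explicitly via the left-normal, which makes the sign conventions and the tangent-point location fully explicit.
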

\begin{proof}
From Fig. \ref{fig:pathlsl}, the perpendicular distance from the center of the target circle to the straight line segment of the LSL path is $r+2rsin(\phi_2-\frac{\pi}{2})$, which is equal to $r-2rcos(\phi_2)$. 
\end{proof}
\subsubsection{RSL Paths}
\begin{figure}[htpb]
\begin{center}
{\includegraphics[width=3.25in]{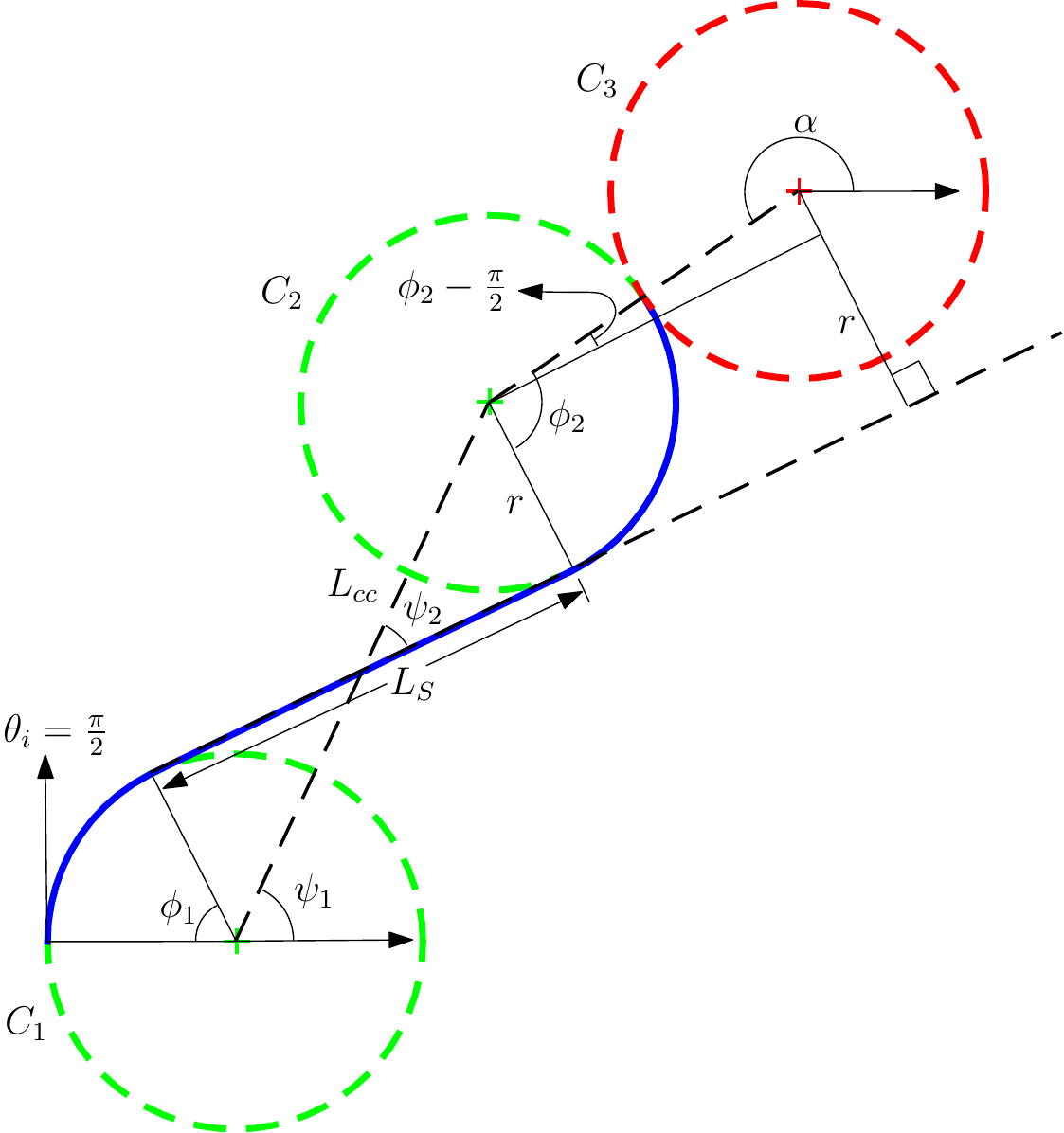}}
\end{center}
\caption{RSL Path}
\label{fig:pathrsl} 
\end{figure}
Without loss of generality we assume that the initial position is at the origin and the initial heading is towards the positive $y$-axis, i.e. the initial heading is at $\frac{\pi}{2}$ degrees with respect to $x$-axis as shown in the Fig. \ref{fig:pathrsl}. Let $(x,y)$ be the coordinates of the final position, which lies on the target circle, and $\theta$ be the final heading direction. The centre of the second circle $C_2$ is given by $(x-r\sin \theta, y+ r\cos \theta)$. The length of the $RSL$ path is the sum of the three segments $L_S+r(\phi_1+\phi_2)$. Let $L_{cc}$ be the distance between the centers of the circles $C_1$ and $C_2$, and it is given by 
\begin{equation}
L_{cc}=\sqrt{(x -r\sin \theta-r)^2+(y+ r\cos \theta)^2}. \label{eqn:rsldcc}
\end{equation}
The length of the straight line segments and the angles subtended by the arcs are given as follows:
\begin{flalign}
L_S &= \sqrt{L_{cc}^2 - 4r^2}, \label{eqn:rslls}\\
\phi_1 &= \mod( - \psi_1+\psi_2 + \frac{\pi}{2}, 2\pi),  \label{eqn:rslphi1} \\
\phi_2 &= \mod(\theta + \phi_1 - \frac{\pi}{2}, 2 \pi), \label{eqn:rslphi2}
\end{flalign}
where $\psi_1$ and $\psi_2$ are given as
\begin{flalign}
 \psi_1&= \mbox{atan2}\left(\frac{y+ r\cos \theta}{x -r\sin \theta-r}\right), \label{eq:psi1} \\
 \psi_2 &=\mbox{atan2}\left(\frac{2r}{L_S}\right). \label{eq:psi2}
\end{flalign}

By substituting  $(x,y) = (c+r\cos\alpha, d+r\sin\alpha )$ and $\theta  = \alpha - \frac{\pi}{2}$ in the eqs. (\ref{eqn:rsldcc}), (\ref{eq:psi1}) and  (\ref{eq:psi2}), $L_{cc}$, $\psi_1$ and $\psi_2$ reduces to the following:

\begin{flalign}
L_{cc} &= \sqrt{(c + 2r\cos \alpha-r)^2+(d + 2r\sin \alpha)^2} \label{eqn:rsldcccw} \\
%L_S &= \sqrt{L_{cc}^2 - 4r^2}, \label{eqn:rsllscw}\\
%\phi_1 &= \mod(-\psi_1 + \psi_2 + \frac{\pi}{2}, 2 \pi), \label{eqn:rslphi1cw}\\
%\phi_2 &= \mod(\theta + \phi_1 - \frac{\pi}{2}, 2 \pi). \label{eqn:rslphi2ccw}
\psi_1 &= \mbox{atan2}\left(\frac{d + 2r\sin \alpha}{c + 2r\cos \alpha-r}\right) \\
\psi_2 &= \mbox{atan2}\left(\frac{2r}{L_S}\right)
\end{flalign}

 \begin{lemma}\label{lem:rslmin}
 The extremum of the length of the RSL path with respect to the angular position on the target circle ($\alpha$) occurs when $\phi_2=\frac{\pi}{3}$ or $\frac{5 \pi}{3}$; and at these two positions, the direction of the straight line of RSL path (second segment) passes through the center of the target circle.
 \end{lemma}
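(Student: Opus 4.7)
The plan is to follow the blueprint of Lemma~\ref{lem:lslmin}, adapted to the internal-tangency geometry of the RSL path. Substituting $\theta = \alpha - \pi/2$ and $(x,y) = (c + r\cos\alpha, d + r\sin\alpha)$ into (\ref{eqn:rsldcc})--(\ref{eqn:rslphi2}) expresses $L_{RSL}(\alpha) = L_S(\alpha) + r\bigl(\phi_1(\alpha) + \phi_2(\alpha)\bigr)$ as an explicit function of $\alpha$. Using the identities $\phi_1 = -\psi_1 + \psi_2 + \pi/2$ and $\phi_2 = \theta + \phi_1 - \pi/2$, the derivative reduces to
\begin{equation*}
\frac{dL_{RSL}}{d\alpha} \;=\; \frac{dL_S}{d\alpha} + r\bigl(2\phi_1'(\alpha) + 1\bigr),
\end{equation*}
with $\phi_1'(\alpha)$ obtained term-by-term from (\ref{eq:psi1})--(\ref{eq:psi2}).

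The key step is to show that, after simplification, $dL_{RSL}/d\alpha$ equals the signed perpendicular distance $d_\perp$ from the center of the target circle to the straight segment, in direct analogy with the corollary following Lemma~\ref{lem:lslmin}. To derive this geometrically, note that the second arc is an L-turn through $\phi_2$, so the direction of the straight segment is $\theta_L = \theta - \phi_2 = \alpha - \pi/2 - \phi_2$. The center of $C_2$ lies at $(c + 2r\cos\alpha,\, d + 2r\sin\alpha)$, and the line is tangent to $C_2$ with the center one radius to its left; projecting the vector from the tangent point to $(c,d)$ onto the unit normal $(-\sin\theta_L, \cos\theta_L)$ yields
\begin{equation*}
d_\perp \;=\; r + 2r\sin(\theta_L - \alpha) \;=\; r - 2r\cos\phi_2,
\end{equation*}
since $\sin(-\pi/2 - \phi_2) = -\cos\phi_2$.

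Setting $dL_{RSL}/d\alpha = 0$ is therefore equivalent to $\cos\phi_2 = 1/2$, whose solutions in $[0, 2\pi)$ are exactly $\phi_2 = \pi/3$ and $\phi_2 = 5\pi/3$. Vanishing of $d_\perp$ is precisely the condition that the straight segment passes through the target center $(c,d)$, so both assertions of the lemma follow at once.

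The main obstacle I anticipate is the algebraic reduction $dL_{RSL}/d\alpha = r - 2r\cos\phi_2$. Because $L_S$, $\psi_1$, and $\psi_2$ depend on $\alpha$ through nested square roots and arctangents, matching the direct differentiation to the geometric quantity requires careful use of the coupled identities $L_{cc}\cos\psi_2 = L_S$, $L_{cc}\sin\psi_2 = 2r$, and their $\alpha$-derivatives to collapse the arctangent terms. The internal-tangency sign flip relative to LSL changes the relation between $\theta_L$ and $\phi_2$ but not the final form of $d_\perp$, which is consistent with the LSL corollary and explains why the extremal angles coincide in the two cases.
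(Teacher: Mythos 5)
Your skeleton matches the paper's proof: write $L_{RSL}=L_S+r(2\phi_1+\alpha-\pi)$, differentiate, reduce the derivative to $r-2r\cos\phi_2$, and read off $\phi_2=\frac{\pi}{3}$ or $\frac{5\pi}{3}$. Your geometric computation $d_\perp=r+2r\sin(\theta_L-\alpha)=r-2r\cos\phi_2$ is correct, and it actually establishes the second assertion of the lemma (that the straight segment passes through $(c,d)$ exactly when $\cos\phi_2=\frac{1}{2}$) more explicitly than the paper, which appeals to a figure.

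The gap is in the step you yourself flag as the key one: you never establish that $\frac{d}{d\alpha}L_{RSL}=d_\perp$, and without that identity the first assertion (that the extremum occurs at these values of $\phi_2$) is unproven. You justify the identity ``in direct analogy with the corollary following Lemma~\ref{lem:lslmin},'' but in the paper that corollary is a restatement of the \emph{output} of the lemma's differentiation --- the lemma first derives $\frac{d}{d\alpha}L_{LSL}=r-2r\cos\phi_2$ by explicit computation, and the corollary then observes that this expression coincides with the perpendicular distance. Invoking the corollary to prove the lemma is circular unless you supply an independent variational or envelope argument for why the derivative of the path length with respect to the terminal angular position equals the perpendicular distance from the target center to the straight segment, and no such argument is given. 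The paper closes this gap by brute force: it differentiates $L_S$, $\psi_1$, and $\psi_2$ term by term, uses $L_{cc}\cos\psi_2=L_S$, $L_{cc}\sin\psi_2=2r$, and $\psi_2=\phi_1+\psi_1-\frac{\pi}{2}$ to express each derivative through $\sin(\psi_1-\alpha)$, $\cos(\alpha-\psi_1)$, and $\sin(\phi_1+\psi_1)$, and then collapses the sum to $2r\cos(\alpha+\phi_1)+r=-2r\cos\phi_2+r$. You correctly name the identities required for this cancellation, but naming them is not carrying it out; until that reduction (or a genuine variational substitute) is written down, the proof is incomplete.
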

\begin{proof}
Refer to the Appendix for the proof.
\end{proof}

\begin{corol}
The absolute value of the derivative of the length of the RSL path $\frac{d}{d \alpha} L_{RSL}$ represents the perpendicular distance from the center of the target circle to the straight line segment of the RSL path.
\end{corol}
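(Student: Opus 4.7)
The plan is to adapt the proof of the LSL corollary, which read the perpendicular distance from the figure and reduced it to $r-2r\cos\phi_2$. The same template works here, but the RSL straight segment is an internal common tangent to $C_1$ and $C_2$ rather than an external one, so the geometry must be re-derived.

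First I would note that the center of $C_2$, after substituting $\theta=\alpha-\pi/2$ and $(x,y)=(c+r\cos\alpha,d+r\sin\alpha)$ into $(x-r\sin\theta,y+r\cos\theta)$, works out to $(c+2r\cos\alpha,d+2r\sin\alpha)$. Thus the target center $(c,d)$ lies at distance $2r$ from the center of $C_2$, along the direction $-(\cos\alpha,\sin\alpha)$. Since the straight segment is tangent to $C_2$, the perpendicular distance from the target center to the straight segment equals $r$ shifted by the component of the radial offset $-(2r\cos\alpha,2r\sin\alpha)$ along the segment's unit normal $\hat{n}$.

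Next, I would extract the segment's direction from $\phi_1$. Because the initial heading is $\pi/2$ and the first arc is traversed clockwise through $\phi_1$, the heading along the straight segment is $\pi/2-\phi_1$, so its unit tangent is $(\sin\phi_1,\cos\phi_1)$ and its unit normal $\hat{n}$ (pointing toward the left-turn center $C_2$) is $(-\cos\phi_1,\sin\phi_1)$. A brief dot-product computation then yields the projection $2r\cos(\alpha+\phi_1)$. Combining equation~(\ref{eqn:rslphi2}) with $\theta=\alpha-\pi/2$ produces the identity $\alpha+\phi_1=\phi_2+\pi$, so this projection collapses to $-2r\cos\phi_2$ and the perpendicular distance reduces to $|r-2r\cos\phi_2|$.

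Finally, I would invoke the closed form of $\frac{d}{d\alpha}L_{RSL}$ established in the appendix proof of Lemma~\ref{lem:rslmin}, which is $\pm(r-2r\cos\phi_2)$ --- the same expression whose vanishing produces the extremum condition $\phi_2\in\{\pi/3,5\pi/3\}$. Equating the two expressions establishes the corollary. The main subtlety I anticipate is the sign choice for $\hat{n}$: its geometric orientation is pinned by the left turn at $C_2$, but I must verify that this matches the sign of the derivative rather than being off by an overall flip inherited from the internal-tangent convention.
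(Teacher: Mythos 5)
Your proposal is correct and follows essentially the same route as the paper: both identify the perpendicular distance from the target center to the straight segment as $r-2r\cos\phi_2$ (the paper reads $r+2r\sin(\phi_2-\frac{\pi}{2})$ off Fig.~\ref{fig:pathrsl}) and match it against the expression $-2r\cos\phi_2+r$ for $\frac{d}{d\alpha}L_{RSL}$ from the proof of Lemma~\ref{lem:rslmin}. Your version merely replaces the figure-reading step with an explicit projection computation using the center of $C_2$ at $(c+2r\cos\alpha,\,d+2r\sin\alpha)$ and the identity $\alpha+\phi_1=\phi_2+\pi$, which is a more rigorous rendering of the same argument.
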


\begin{proof}
From Fig. \ref{fig:pathrsl}, the perpendicular distance from the center of the target circle to the straight line segment of the RSL path is $r+2rsin(\phi_2-\frac{\pi}{2})$, which is equal to $r-2rcos(\phi_2)$. 
\end{proof}

\subsection{Counter clockwise tangential direction}\label{subsec:ccw}
\begin{prop} \label{prop:lslrslccw}
The length of the LSL and RSL paths vary linearly with the angular position ($\alpha$) on the target circle, except for a discontinuity when the final arc of the paths disappears and the paths degenerate to two segment paths (LS or RS), and this corresponds to the shortest LSL or RSL path.
\end{prop}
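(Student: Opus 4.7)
The plan is to exploit a geometric coincidence that is specific to the counter-clockwise tangency case together with Assumption \ref{assum:equalr}: namely, that the second turning circle $C_2$ of the LSL (or RSL) path coincides exactly with the target circle $C_3$. For the counter-clockwise tangent heading we have $\theta = \alpha + \frac{\pi}{2}$, so $\sin\theta = \cos\alpha$ and $\cos\theta = -\sin\alpha$. Substituting this and $(x,y)=(c+r\cos\alpha, d+r\sin\alpha)$ into the formula $(x-r\sin\theta,\; y+r\cos\theta)$ for the center of $C_2$ gives $(c,d)$. Since both circles also have radius $r$, we conclude $C_2 \equiv C_3$. This holds for both LSL and RSL, because in each case the second segment is a left turn and hence has the same center formula.

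Once $C_2 \equiv C_3$ is established, the proof reduces almost to inspection. The first circle $C_1$ is fixed by the initial configuration, and $C_2$ is fixed (independently of $\alpha$) as $C_3$, so the tangent line realizing the middle segment is also fixed: for LSL it is the relevant outer tangent to $C_1$ and $C_3$, for RSL the relevant inner tangent. Consequently $L_S$ and $\phi_1$ are constants in $\alpha$. Let $\alpha_0$ denote the angular position on $C_3$ at which this tangent line meets $C_3$. Then the final arc is traversed counter-clockwise from $\alpha_0$ to $\alpha$, so
\begin{equation*}
\phi_2 \;=\; (\alpha - \alpha_0) \bmod 2\pi,
\end{equation*}
and therefore $L = L_S + r\phi_1 + r\bigl((\alpha-\alpha_0)\bmod 2\pi\bigr)$, which is affine in $\alpha$ with slope $r$ away from the single jump at $\alpha=\alpha_0$. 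This is also consistent with the corollaries following Lemmas \ref{lem:lslmin} and \ref{lem:rslmin}: since the straight segment is tangent to $C_2 \equiv C_3$, the perpendicular distance from the center of $C_3$ to the straight segment is exactly $r$, so $\bigl|\tfrac{dL}{d\alpha}\bigr|=r$.

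Finally, I would identify the discontinuity with the shortest path. The jump in $L$ occurs precisely where $\phi_2$ wraps from $2\pi^{-}$ back to $0^{+}$, that is, where the final arc disappears and the path degenerates to LS (resp. RS). Because $L$ is strictly increasing on either side of this $\alpha_0$ and drops by $2\pi r$ across it, the infimum of $L$ is attained at $\alpha_0$ with $\phi_2=0$, proving that the degenerate two-segment path is the shortest LSL (resp. RSL) path. The only subtle step is the modular bookkeeping that pins down the direction of the jump; once the coincidence $C_2\equiv C_3$ is in hand, everything else is immediate, so I do not anticipate a substantive obstacle.
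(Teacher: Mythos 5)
Your proposal is correct and follows essentially the same route as the paper: both arguments rest on observing that for $\theta = \alpha + \frac{\pi}{2}$ the second circle $C_2$ coincides with the target circle $C_3$, so that $L_S$ and $\phi_1$ are independent of $\alpha$ and the length varies only through $\phi_2$, which is linear in $\alpha$ modulo $2\pi$ with its jump (and hence the minimum) at the degenerate LS/RS configuration. Your explicit computation of the center of $C_2$ from $(x-r\sin\theta,\, y+r\cos\theta)$ is a slightly cleaner way to establish the co-location than the paper's substitution into the formulas for $L_S$ and $\phi_1$, but the argument is the same.
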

A sample plot of the length of these paths is shown in the Fig. \ref{fig:cscalccw}.  We will prove this proposition in the following subsections for the LSL and RSL paths.

\begin{figure}[h]
\begin{center}
{\includegraphics[width=2.75in]{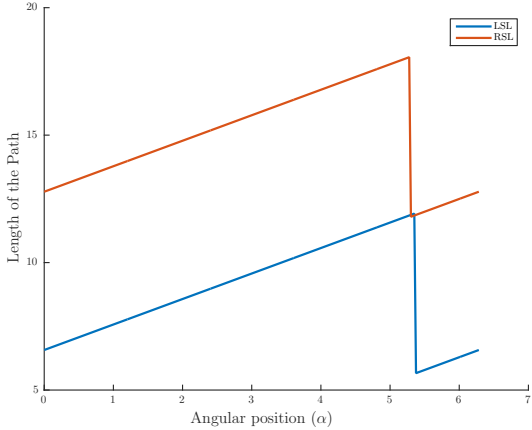}}
\end{center}
\caption{The length of the LSL and RSL paths vs the angular position on target circle. Final headings are counter-clockwise tangents to the target circle.}
\label{fig:cscalccw}
\end{figure}

\subsubsection{LSL Paths}

%\begin{figure}[h]
%\begin{center}
%{\includegraphics[width=3.25in]{figures/PathLSLa.pdf}}
%\end{center}
%\caption{LSL Path}
%\label{fig:pathlsl} 
%\end{figure}
The final heading direction is counter-clockwise tangent to the target circle, and it implies that $\theta  = \alpha + \frac{\pi}{2}$. Substituting for $(x,y)$ and $\theta$ in equations (\ref{eqn:lslls} - \ref{eqn:lslphi1}) gives the following:

\begin{flalign}
L_S &= \sqrt{(c^2 +(d-r)^2}, \label{eqn:lsllsccw}\\
\phi_1 &= \mod\left(\mbox{atan2}\left(\frac{d-r}{c}\right),2\pi\right). \label{eqn:lslphi1ccw}
\end{flalign}

Clearly, the length of the first arc and the straight line segment are constant, and the second circle $C_2$ is co-located with the target circle $C_3$. From eq. (\ref{eqn:lslphi2}), one can see that length of the final arc changes linearly with $\alpha$, except for a discontinuity due to the modulus function. This confirms with the plot of the length of the LSL path as shown in the Fig. \ref{fig:cscalccw}. The minimum of the LSL path occurs when the length of the third segment goes to zero, or $\theta = \phi_1$, which means the direction of the straight line segment is same as the final heading. Thus, the LSL path is shortest at the position where the straight line segment is tangential to the target circle (point $B$ shown in the Fig. \ref{fig:lslccw}), and here the LSL path degenerates to a two segment path LS. For any other position, the LSL paths would consist of these two LS segments, and a third arc on target circle $C_3$ as shown in the Fig. \ref{fig:lslccw}.

\begin{figure}[htpb]
\begin{center}
{\includegraphics[width=3in]{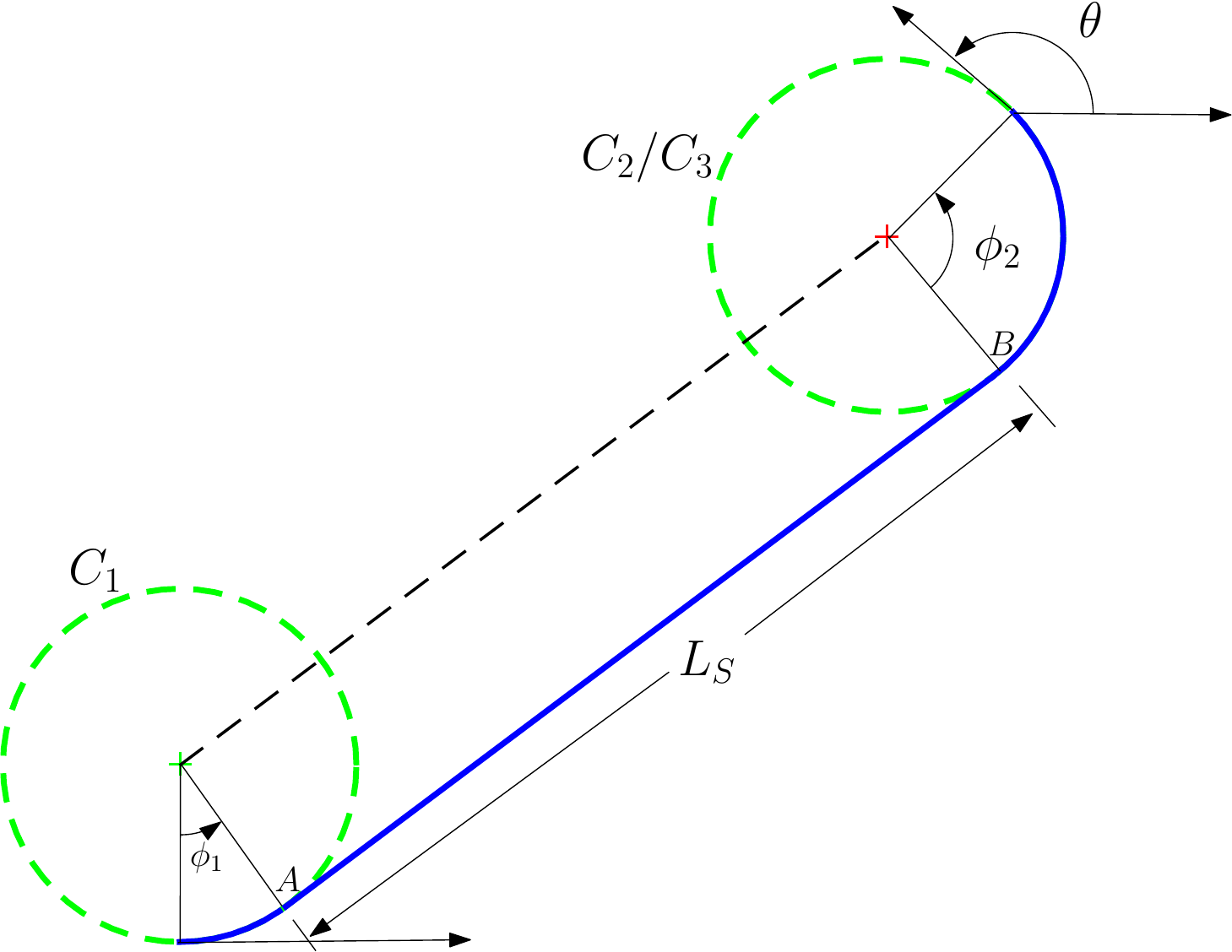}}
\end{center}
\caption{LSL Path where the final heading is a counter clockwise tangent to the target circle}
\label{fig:lslccw} 
\end{figure}

\subsubsection{RSL Paths}

%\begin{figure}[htpb]
%\begin{center}
%{\includegraphics[width=3.25in]{figures/PathRSLa.pdf}}
%\end{center}
%\caption{RSL Path}
%\label{fig:pathrsl} 
%\end{figure}
In this case, as the final heading direction is in the counter-clockwise tangential direction, $\theta =\alpha + \frac{\pi}{2}$, and substituting this in eqs. (\ref{eqn:rsldcc})-(\ref{eqn:rslphi2}) gives the folllowing:
\begin{flalign}
L_{cc} &= \sqrt{(c-r)^2+(d)^2} \label{eqn:rsldccccw} \\
L_S &= \sqrt{L_{cc}^2 - 4r^2}, \label{eqn:rsllsccw}\\
\phi_1 &=\mod \left(\mbox{atan2}\left(\frac{2r}{L_S}\right) - \mbox{atan2}\left(\frac{d}{c-r}\right) + \frac{\pi}{2}, 2 \pi\right), \label{eqn:rslphi1ccw}\\
\phi_2 &= \mod\left(\theta + \phi_1 - \frac{\pi}{2}, 2 \pi\right). \label{eqn:rslphi2ccw}
\end{flalign}
Similar to the LSL path, the first two segments are constant for any $\alpha$, and the circles $C_2$ and $C_3$ are co-located. Hence, the length of the $RSL$ path varies only due to  the third segment. The value of $\phi_2$ is a piecewise linear function of $\alpha$, and the discontinuity as shown in the Fig. \ref{fig:cscalccw} is due to the modulus function which occurs when $\phi_2$ goes to zero. Also, the RSL path is shortest when this third segment goes to zero, \textit{i.e.} the point where the straight line segment is tangential to the target circle $C_3$ (point $B$ shown in the Fig. \ref{fig:rslccw}).
\begin{figure}[htpb]
\begin{center}
{\includegraphics[width=2.5in]{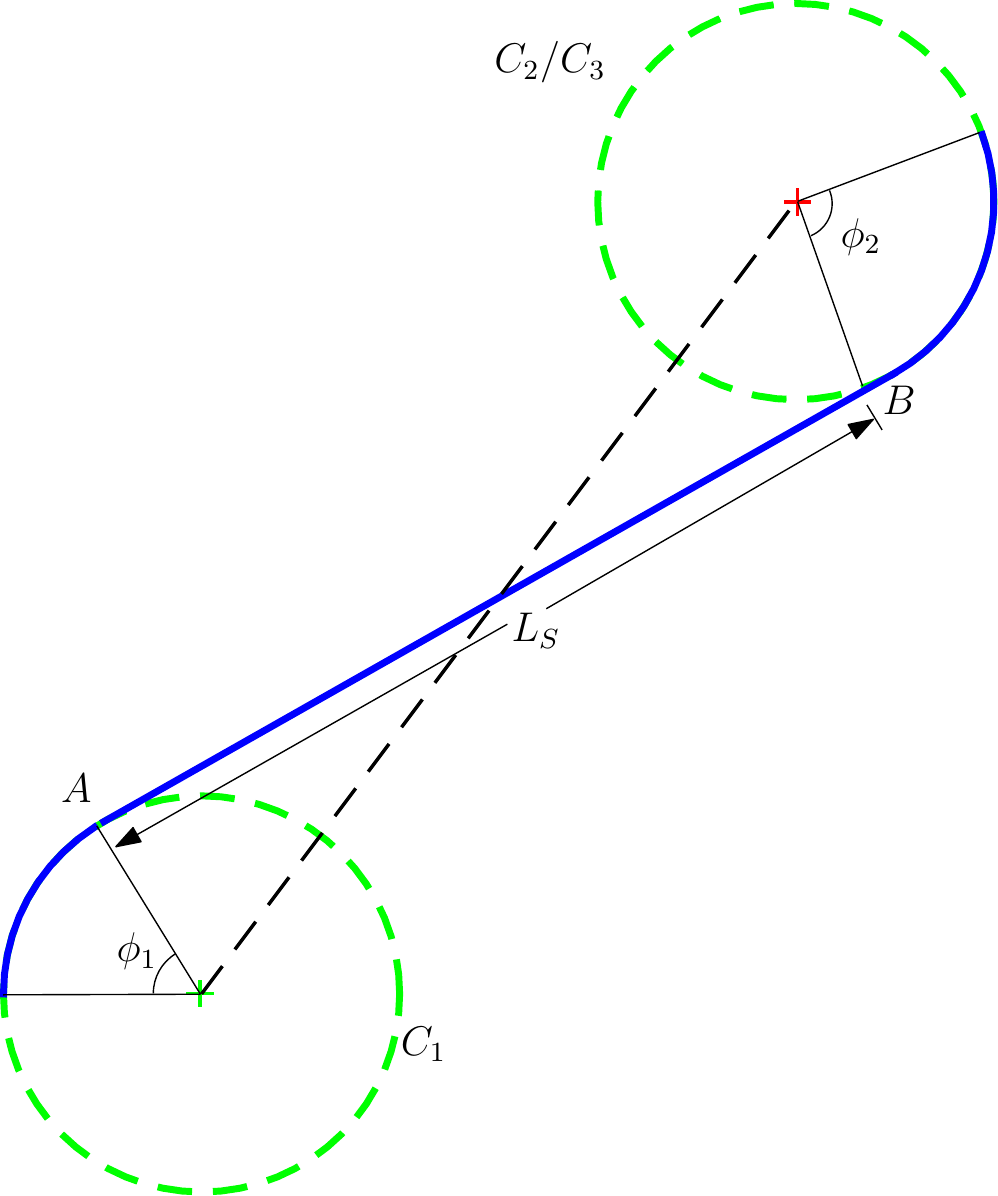}}
\end{center}
\caption{RSL Path with final heading in the counter-clockwise tangential direction to the target circle.}
\label{fig:rslccw} 
\end{figure}
Lemmas \ref{lem:lslmin} and \ref{lem:rslmin} completes the proof for Proposition \ref{thm:csccw}.

\section{Summary of results}

For a given rotational direction of the tangent at the target circle, using the analysis in Section \ref{sec:main}, we could find the shortest LSL and RSL paths. Due to the symmetry of the CSC paths, one could evidently extend these conditions to the RSR and LSR paths. In summary, one could find the shortest of any CSC path based on whether the rotational direction of the second arc ($C_2$) of the CSC path and the direction of the tangent at the target circle are same or different. $(i)$  If they are same, the circles $C_2$ and $C_3$ will be co-located, and the shortest CSC path occurs when third arc disappears and the path degenerates to a CS path. $(ii)$ If they are different, the shortest path occurs at the angular position on the target circle at which the second arc will be of angle $\frac{\pi}{3}$, and the direction of the straight line passes through the center of the target circle. Now the path of minimum length among the shortest LSL, LSR, RSR and RSL gives the shortest CSC path: $CSC_{min} = \min \{LSL_{min}, LSR_{min}, RSR_{min}, RSL_{min} \}$. We summarize the conditions for the minimum  and the discontinuity of the four CSC paths in the Tables \ref{tab:ressum1} and \ref{tab:ressum2}. Though we assume the initial heading to be zero for LSL and $\frac{\pi}{2}$ for the RSL paths in the Section \ref{sec:main}, the results could be generalized for any initial heading. We list the conditions in the Tables \ref{tab:ressum1} and \ref{tab:ressum2} for any given initial heading $\theta_i$.

%\begin{enumerate}[label=(\roman*)]
%\item If the ,
%\item If the rotational direction of the second arc of the CSC path and tangent at the target circle are different, 
%\end{enumerate}

\begin{table}[t]
\caption{SUMMARY OF RESULTS FOR COUNTER-CLOCKWISE TANGENT AT TARGET CIRCLE}
\begin{center}
\label{tab:ressum1}
\begin{tabular}{ccl}
& & \\ 
\hline
Path type & Minimum & Discontinuity \\
\hline
LSL & $\alpha = \theta_i+\phi_1 - \frac{\pi}{2} $ & $\alpha = \theta_i+\phi_1 - \frac{\pi}{2} $\\
RSL & $\alpha = \theta_i -\phi_1 - \frac{\pi}{2} $ &  $\alpha =\theta_i -\phi_1 - \frac{\pi}{2}$ \\
RSR & $\alpha = \theta_i-\phi_1 - \frac{5\pi}{6} $ & $ (i) \alpha =\theta_i -\phi_1 - \frac{\pi}{2}$ \\
& & $(ii) (\phi_1=0)$ \\
LSR & $\alpha = \theta_i + \phi_1 - \frac{5\pi}{6} $ & $(i)\alpha = \theta_i+\phi_1 - \frac{\pi}{2}$ \\
& & $(ii) (\phi_1=0)$ \\
\hline
\end{tabular}
\end{center}
\end{table}

\begin{table}[t]
\caption{SUMMARY OF RESULTS FOR CLOCKWISE TANGENT AT TARGET CIRCLE}
\begin{center}
\label{tab:ressum2}
\begin{tabular}{ccl}
& & \\ 
\hline
Path type & Minimum & Discontinuity \\
\hline
LSL & $\alpha = \theta_i+\phi_1 + \frac{5\pi}{6} $  & $(i) \alpha = \theta_i+\phi_1 + \frac{\pi}{2} $\\
& & $(ii) (\phi_1=0)$ \\
RSL & $\alpha = \theta_i -\phi_1 + \frac{5\pi}{6} $ &  $(i) \alpha =\theta_i -\phi_1 + \frac{\pi}{2}$ \\
& & $(ii) \phi_1=0$\\
RSR & $\alpha = \theta_i -\phi_1 + \frac{\pi}{2} $ &  $\alpha = \theta_i -\phi_1 + \frac{\pi}{2} $ \\
LSR & $\alpha = \theta_i + \phi_1 + \frac{\pi}{2} $ & $\alpha = \theta_i + \phi_1 + \frac{\pi}{2} $\\
\hline
\end{tabular}
\end{center}
\end{table}

% used for an ASME paper is asmems4.bst.

\section{Conclusion}
We considered a generalization of the Dubins path problem, which is to find a shortest Dubins path from an initial configuration to a final position that lies on a given target circle and the final heading is a tangent to the circle. We assume the distance between the initial and final configurations is always greater than four times minimum turn radius, and this leads to only paths of type CSC. We characterized the length of the four CSC paths with respect to the angular position on the target circle, and presented the necessary conditions for the minimum and maximum. The minimum of these four shortest paths would give the shortest Dubins path to the circle. We also derive for the angular positions ($\alpha$), at which the lengths of these paths are discontinuous. Here, we assume the minimum turn radius for the paths and the radius of the target circle are equal, a first future direction would be to extend these results to the case when these two radii are different. Another future direction is to characterize the CCC paths, and all the six classes of paths would complete the analysis for the shortest Dubins path to circle.

\bibliographystyle{asmems4}
\bibliography{DubinsToCircle}

%%%%%%%%%%%%%%%%%%%%%%%%%%%%%%%%%%%%%%%%%%%%%%%%%%%%%%%%%%%%%%%%%%%%%%
%\appendix       %%% starting appendix
\section{Appendix A}

\begin{figure}
\begin{center}
\subfigure[Minimum LSL path ]{\includegraphics[width=2.25in]{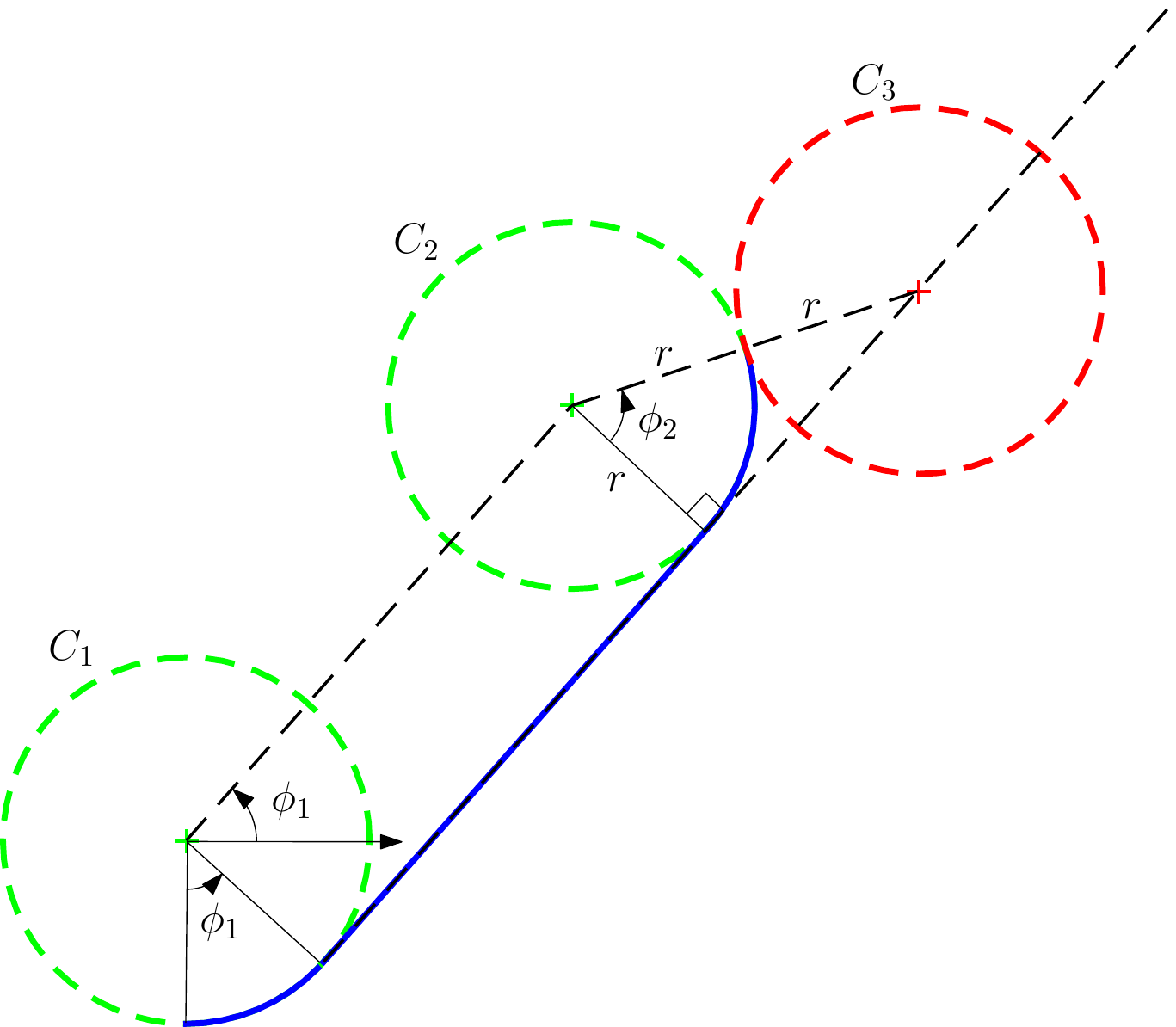}\label{fig:lslmin}}
\subfigure[Maximum LSL path]{\includegraphics[width=2.25in]{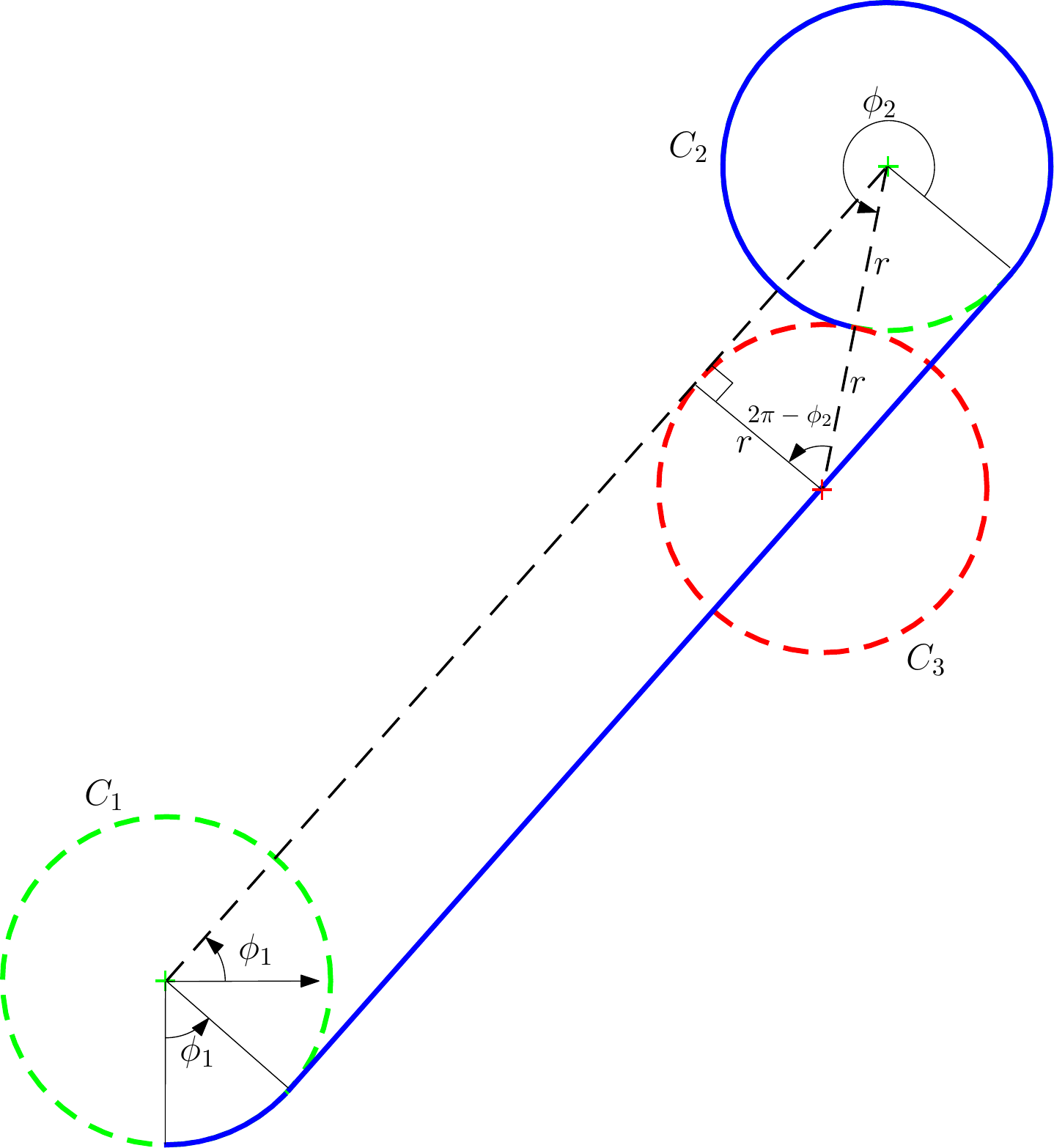}\label{fig:lslmax}}
\end{center}
\caption{Minimum and maximum LSL paths, shows that the direction of straight line segment passes through the centre of the target circle, when final arc is $\frac{\pi}{3}$ or $\frac{5\pi}{3}$.}
\label{fig:lslminmax} 
\end{figure}

\subsection{Proof of Lemma \ref{lem:lslmin}} \label{sec:lslminproof}
\begin{proof}
The length of the LSL path to the target circle is given by the summation of the three segments of the path, $$L_{LSL}=L_S+r(\phi_1+\phi_2)$$. The length of the LSL path, $L_{LSL}$ is piecewise continuous due to the modulus operator in the values of $\phi_1$ and $\phi_2$. And therefore, the derivative of this function is not defined when $\mbox{atan2}(\frac{d+2r\sin \alpha-r}{c+2r\cos \alpha}) = 0$ and $\theta-\phi_1 = 0$. To find the extremum of $L_{LSL}$, we will differentiate it with respect to $\alpha$ and make it equal to zero. At the points where the derivative exists, it is given as following:

\begin{flalign*}
\frac{d}{d \alpha} L_{LSL} &= \frac{d}{d \alpha} \left( L_S  + r(\alpha - \frac{\pi}{2}) \right)\\
&= -2r \sin \alpha \left(\frac{c+2r \cos \alpha}{L_S} \right)+ 2r \left(\frac{d+2r \sin \alpha-r}{L_S}\right)  + r \\
&= 2r(-\sin \alpha \cos \phi_1 + \cos \alpha \sin \phi1) + r \\
&= 2r \sin(\phi_1-\alpha)+r \\
&=2r \sin(\phi_2 - \frac{\pi}{2}) + r\\
&=-2r\cos(\phi_2) + r
\end{flalign*}
Now equating the first derivative gives the necessary conditions for the extremum.
\begin{flalign*}
& \cos(\phi_2) = \frac{1}{2} \\
\implies & \phi_2 = \frac{\pi}{3} \mbox{ or } \frac{5 \pi}{3}
\end{flalign*}
It is clear from the Fig. \ref{fig:lslminmax} that when $\phi_2=\frac{\pi}{3}$ or $\phi_2=\frac{5\pi}{3}$, the direction of the straight line segment of the LSL path passes through center of the target circle.
\end{proof}

\subsection{Proof of Lemma \ref{lem:rslmin}}\label{sec:rslminproof}
\begin{proof}
The length of the RSL path to the target circle is given by the summation of the three segments of the path, $$L_{RSL}=L_S+r(\phi_1+\phi_2)$$. Similar to the LSL path, The length of the RSL path, $L_{RSL}$ is piecewise continuous due to the modulus operator in the values of $\phi_1$ and $\phi_2$. Hence, its derivative is not defined for the $\alpha$ values where the following satisfies:
\begin{flalign}
& \mbox{atan2}\left( \frac{2r}{L_S} \right) - \mbox{atan2} \left( \frac{d+2r\sin \alpha}{c+2r\cos \alpha} \right) +\frac{\pi}{2}= 0, \\
& \theta + \phi_1 -\frac{\pi}{2}= 0.
\end{flalign}
To find the extremum of $L_{RSL}$, we will differentiate it with respect to $\alpha$ and make it equal to zero. At the points where the derivative exists, it is given as following:
\begin{flalign*}
\frac{d}{d \alpha} L_{RSL} &= \frac{d}{d \alpha} \left( L_S  + r(2 \phi_1 + \alpha - \pi) \right)\\
&=  \frac{d}{d \alpha} \left( L_S  + r( -2\psi_1 + 2\psi_2 + \alpha) \right)
\end{flalign*}

First, we wil derive the first derivative of these individual terms:
\begin{flalign*}
\frac{d}{d \alpha} L_{S} &= 4r \left(cos \alpha \frac{d+2r \sin\alpha}{L_S} -\sin \alpha \frac{c+2r \cos \alpha -r}{L_S} \right) \\
&= 4r \left(\cos \alpha \frac{d+2r \sin\alpha}{L_{cc}}\frac{L_{cc}}{L_S} -\sin \alpha \frac{c+2r \cos \alpha -r}{L_{cc}}\frac{L_{cc}}{L_S} \right) \\
&=\frac{2r}{\cos \psi_2} \left( \cos \alpha \sin \psi_1 -\sin \alpha\cos \psi_1  \right) \\
&= 2r \frac{sin (\psi_1-\alpha)}{\sin (\phi_1+\psi_	1)}
\end{flalign*}

\begin{flalign*}
\frac{d}{d \alpha} \psi_1 &= \frac{2r}{d^2_{cc}} \left( \sin \alpha (d+2r \sin\alpha) + \cos \alpha (c+2r \cos\alpha-r)  \right) \\
&= \frac{2r}{L_{cc}} \left( \sin \alpha \sin \psi_1 + \cos \alpha \cos \psi_1 \right) \\
&= \sin(\psi_2) \cos(\alpha-\psi_1)\\
&= -\cos(\psi_1+\phi_1) \cos(\alpha-\psi_1)
\end{flalign*}

\begin{flalign*}
\frac{d}{d \alpha} \psi_2 &= \frac{-4r^2}{d^2_{cc}} \left( \cos \alpha \frac{d+2r \sin\alpha}{L_S} - \sin \alpha \frac{c+2r \cos\alpha+r}{L_S}  \right) \\
&= \frac{-4r}{L_{cc}} \frac{r}{L_S} \left( \cos \alpha \sin \psi_1 - \sin \alpha \cos \psi_1 \right)  \\
&= \sin(\psi_2) \tan \psi_2 \sin(\psi_1-\alpha) \\
&= \frac{\cos^2(\psi_1+\phi_1)}{\sin(\psi_1+\phi_1)} \sin(\alpha-\psi_1)
\end{flalign*}

\begin{flalign*}
\frac{d}{d \alpha} L_{RSL} &= \frac{2r \sin(\psi_1-\alpha)}{\sin (\phi_1 + \psi_1)} +\frac{2r\cos^2(\phi_1+\psi_1) \sin(\alpha-\psi_1)}{\sin (\phi_1 + \psi_1)} \\
&\quad+2r \cos(\phi_1+\psi_1) \cos(\alpha-\psi_1)+r\\
&=2r\cos(\alpha + \phi_1)+r \\
&=2r\cos(\phi_2+\pi) + r\\
&=-2r\cos(\phi_2) + r
\end{flalign*}

The maximum or minimum occurs when $\frac{d}{d \alpha} L_{RSL}=0$, which implies that
\begin{flalign*}
& \cos(\phi_2) = \frac{1}{2} \\
\implies &\phi_2 = \frac{\pi}{3} or \frac{5\pi}{3}.
\end{flalign*}
Clearly, when the second arc is of angle $\frac{\pi}{3}$ or $\frac{5\pi}{3}$, the direction of straight line segment passes through center of the target circle.
\end{proof}

\end{document}